\theoremstyle{plain}		\newtheorem{theorem}{Theorem}
				\newtheorem{lemma}{Lemma}
\theoremstyle{definition}	
				\newtheorem{example}{Example}
\theoremstyle{remark}		
\newcommand*{\bR}{\ensuremath{\mathbb{R}}}
\newcommand*{\bN}{\ensuremath{\mathbb{N}}}
\newcommand*{\loc}{\mathrm{loc}}
\newcommand*{\closure}[1]{\overline{#1}}
\newcommand*{\Wert}{\mathord{\mbox{|\kern-1.5pt|\kern-1.5pt|}}}
\DeclareMathOperator{\dist}{dist}
\DeclareMathOperator{\Det}{Det}
\DeclareMathOperator{\diam}{diam}
\def\XXint#1#2#3{{\setbox0=\hbox{$#1{#2#3}{\int}$}
     \vcenter{\hbox{$#2#3$}}\kern-.5\wd0}}
\title[Generalized dimension distortion]{Generalized dimension distortion under mappings of sub-exponentially integrable distortion}
\author[ T. Rajala ]{ Tapio Rajala }
\address[ T. Rajala ]{Department of Mathematics and Statistics, University of Jyv\"askyl\"a, P.O. Box 35, Fin-40014 University of Jyv\"askyl\"a, Finland}
\email{tapio.m.rajala@jyu.fi}
\author[A. Zapadinskaya]{Aleksandra Zapadinskaya}
\address[A. Zapadinskaya]{Department of Mathematics and Statistics, University of Jyv\"askyl\"a, P.O. Box 35, Fin-40014 University of Jyväskyl\"a, Finland}
\email{aleksandra.zapadinskaya@jyu.fi}
\author[T. Z\"urcher]{Thomas Z\"urcher}
\address[T. Z\"urcher]{Department of Mathematics and Statistics, University of Jyv\"askyl\"a, P.O. Box 35, Fin-40014 University of Jyväskyl\"a, Finland}
\email{thomas.t.zurcher@jyu.fi}
\subjclass[2000]{30C62}
\keywords{Mappings of finite distortion, sub-exponential distortion, generalized Hausdorff measure, Hausdorff dimension}
\thanks{The second author was partially supported by the Academy of Finland, grant no.~120972, and the third author was supported by the Swiss National Science Foundation.}
\begin{document}
\begin{abstract}
We prove a dimension distortion estimate for mappings of sub-exponentially integrable distortion in Euclidean spaces, which is essentially sharp in the plane.
\end{abstract}
\maketitle
\section{Introduction}

The roots of our studies lie in~\cite{Geh-Väi}, where the following was proved:
given a planar $K$\nobreakdash-quasiconformal mapping~$f$ and a set
$E$ with $\dim_{\mathcal H} E<2$, we have $\dim_{\mathcal H} f(E)\leq \beta<2$, where
$\beta$ depends only on $K$ and the Hausdorff dimension $\dim_{\mathcal H} E$ of the set $E$. 
Later, it was shown that the same is true in higher dimensions with $\beta$ depending on the dimension of the underlying space as well as on $K$ and on $\dim_{\mathcal H} E$ (see~\cite{gehring})
. These results rely on the higher integrability of the Jacobian of a quasiconformal mapping~\cite{BojHigher,gehring}.

Recent extensions take a wider class of mappings into consideration. A continuous mapping $f\in W^{1,1}_\loc(\Omega,\bR^n)$ ($\Omega\subset\bR^n$ is a domain) is called a \emph{mapping of finite distortion}, if its Jacobian is locally integrable and there exists a measurable function $K\colon\Omega\to[1,\infty[$ such that
$$
|Df(x)|^n\leq K(x)J_f(x)
$$
for almost every $x\in\Omega$. An assumption on $K$ that still guarantees a lot of the
properties of quasiconformal mappings is the so-called
exponential integrability. This condition requires
that $\exp(\lambda K)$ is locally integrable for some $\lambda$. In this case, $f$ is called a \emph{mapping of $\lambda$-exponentially integrable distortion}.

Such mappings satisfy Lusin's condition~N, i.e. they map sets of measure zero to sets of measure zero, \cite{KKM}. However, in \cite[Proposition~5.1]{HK03}, a mapping $f\colon\bR^n\to\bR^n$ of finite exponentially integrable distortion that maps sets of Hausdorff dimension less than $n$ to sets of Hausdorff dimension $n$ was constructed.

Still it was possible to obtain reasonable dimension distortion results in terms of generalized Hausdorff measure (see the next section for the definition). In \cite{HK03}, it was shown that there exists a constant $k_n$, depending only on $n$, such that if $f\colon \bR^n\to \bR^n$ is a homeomorphism with $\lambda$-exponentially integrable distortion for some $\lambda$, then $\mathcal{H}^h(f(S^{n-1}))<\infty$ for all $p<k_n\lambda$, where $\mathcal{H}^h$ is the generalized Hausdorff measure with gauge function $h(t)=t^n\log^p(1/t)$.

A sharp result of this kind in the planar case was obtained in~\cite{KZZ}, where the circle $S^1$ was replaced by a general set $E$ of Hausdorff dimension less than two: we have $\mathcal{H}^h(f(E))=0$ for all $p<\lambda$, where $h(t)=t^2\log^p(1/t)$, if $f$ is a mapping of $\lambda$-exponentially integrable distortion. The proof is based on the higher regularity for the weak derivatives of the mapping $f$~\cite{eero} and dimension distortion estimates for Orlicz-Sobolev mappings. See~\cite{KZZ1,tapio} for related results in the plane and~\cite{ndim} for the generalization to higher dimensions.

The assumption of exponential integrability for the distortion is further relaxed by replacing it with a more general Orlicz condition. That is, one may assume that
$$
e^{\mathcal A(K)}\in L^1_\loc,\,\,\,\,\,\,\,\text{where}\,\,\,\,\,\,\int\limits_1^\infty\frac{\mathcal A(t)}{t^2}dt=\infty,
$$
for a distortion function $K$ of a mapping of finite distortion $f$\linebreak(see~\cite[Section~20.5]{book}). In particular, when $\mathcal A(t)=p\frac{t}{1+\log t}-p$, for some $p>0$, such mapping $f$ is called a \emph{mapping of sub-exponentially integrable distortion}. Dimension distortion in this particular case is examined in this paper.

Let us agree that from now on, $\Omega$ is always an open set in $\bR^n$, $n\geq2$. Denote $h_{n,\beta}(t)=t^n(\log\log(1/t))^\beta$. We have the following theorem.
\begin{theorem}\label{ndim}
There exists a constant $c>0$, which depends only on the dimension $n$ of the underlying space, such that for every homeomorphism of finite distortion $f\in W^{1,1}_\loc(\Omega;\bR^n)$, $\Omega\subset\bR^n$, with
$$
e^{\frac{K_f}{1+\log K_f}}\in L^p_\loc(\Omega),
$$
we have $\mathcal H^{h_{n,\beta}}(f(E))=0$ for all $\beta<cp$, whenever $E\subset\Omega$ is such that $\dim_{\mathcal H}E<n$.
\end{theorem}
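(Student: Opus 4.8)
The plan is to adapt, to the present weaker hypothesis, the two‑step scheme that works for $\lambda$‑exponentially integrable distortion in \cite{KZZ,ndim}: first upgrade the Orlicz integrability of $K_f$ to a quantitative higher integrability of $Df$, and then deduce the measure estimate for $f(E)$ from a covering argument built on an oscillation estimate.

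\emph{Step 1 (regularity gain).} With $\mathcal A(t)=p\,t/(1+\log t)-p$, the hypothesis reads $e^{\mathcal A(K_f)}\in L^1_\loc(\Omega)$; moreover $|Df|^n\le K_fJ_f$ a.e.\ with $J_f\in L^1_\loc(\Omega)$. Feeding this into the self‑improving higher‑regularity theory for mappings of finite distortion with Orlicz‑integrable distortion (in the spirit of \cite{eero}; see \cite[Section~20.5]{book} for the general Orlicz framework) should yield a dimensional constant $c=c(n)>0$ such that
$$
g:=|Df|^n\bigl(\log\log(e^e+|Df|)\bigr)^{\beta}\in L^1_\loc(\Omega)\qquad\text{for every }\beta<cp,
$$
the threshold $cp$ recording how $e^{\mathcal A}$ sits relative to the scale $t\mapsto t^n(\log\log t)^{\beta}$ in the underlying Young/iteration estimate (up to a harmless shift of the exponent, which is absorbed into $c$). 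In particular $|Df|^n\in L^1_\loc$, so $f\in W^{1,n}_\loc(\Omega;\bR^n)$, $f$ is $\mathrm{ACL}^n$, and the estimates below apply. Fix such a $\beta\in[0,cp)$ from now on.

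\emph{Step 2 (oscillation estimate).} The analytic core is the claim that there is $C=C(n)$ so that, for every $x\in\Omega$ and every small $r>0$ with $B(x,2r)\Subset\Omega$,
$$
\bigl(\diam f(B(x,r))\bigr)^n\Bigl(\log\log\tfrac1r\Bigr)^{\beta}\;\le\;C\int_{B(x,2r)}g(y)\,dy .
$$
This is the counterpart, with $\log$ replaced by $\log\log$, of the estimate driving \cite{HK03,KZZ,ndim}. I would obtain it by a capacity/modulus argument over the dyadic annuli $A_j=B(x,2^{j+1}r)\setminus\overline{B(x,2^jr)}$: $f$ maps the ring bounded by the two spheres of $A_j$ onto a ring separating $f(B(x,2^jr))$ from the complement of $f(B(x,2^{j+1}r))$, so the capacity estimate for that image ring, in which the $(\log\log)^{\beta}$‑weight inside $g$ improves the bare $n$‑capacity bound, together with the standard comparison of diameter and distance for a homeomorphic image of a ball, gives a lower bound for $\log\bigl(\diam f(B(x,2^{j+1}r))/\diam f(B(x,2^{j}r))\bigr)$; summing the $\sim\log\frac1r$ resulting inequalities and invoking convexity produces the factor $(\log\log\frac1r)^{\beta}$. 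Carrying this through — and in particular controlling how the exponent $\beta<cp$ and the power of $\log\log\frac1r$ survive the iteration step of Step~1 together with this convexity step — is the main obstacle; it is also where the planar case is cleaner, the diameter/distance comparison being sharpest there, so that $c$ can be pushed towards its optimal value.

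\emph{Step 3 (covering argument).} Since $\dim_{\mathcal H}E<n$, fix $\varepsilon>0$ with $\mathcal H^{n-\varepsilon}(E)<\infty$ (so $|E|=0$) and $U\Subset\Omega$ with $E\subset U$, so $\int_Ug<\infty$. Given $\delta>0$, a standard covering argument produces a cover of $E$ by balls $B_i=B(x_i,r_i)$ with $B_i\cap E\neq\emptyset$, $r_i<\delta$, $\sum_ir_i^{\,n-\varepsilon}\le C(n)\bigl(\mathcal H^{n-\varepsilon}(E)+1\bigr)$, and $\{2B_i\}$ of bounded overlap; then $\sum_i\int_{2B_i}g\le C(n)\int_{\bigcup_i2B_i}g$, and $\bigcup_i2B_i\subset\{x\in U:\dist(x,E)<4\delta\}$. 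For $\delta$ small all $\diam f(B_i)$ are uniformly small (uniform continuity of $f$ on $\overline U$), hence lie in an interval on which $h_{n,\beta}$ is increasing. Put $u_i:=\bigl(C\int_{2B_i}g\bigr)^{1/n}(\log\log\tfrac1{r_i})^{-\beta/n}$, so $\diam f(B_i)\le u_i$ by Step~2. If $\int_{2B_i}g\ge r_i^{\,n}$, then $1/u_i\le(\log\log\frac1{r_i})^{\beta/n}/(C^{1/n}r_i)$, so $\log\log\frac1{u_i}\le\log\log\frac1{r_i}+O(1)$ and therefore $h_{n,\beta}(\diam f(B_i))\le h_{n,\beta}(u_i)\le C'\int_{2B_i}g$. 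If instead $\int_{2B_i}g<r_i^{\,n}$, then $u_i\le C^{1/n}r_i$ and $h_{n,\beta}(\diam f(B_i))\le C''r_i^{\,n}(\log\log\tfrac1{r_i})^{\beta}\le C''\delta^{\varepsilon}(\log\log\tfrac1\delta)^{\beta}\,r_i^{\,n-\varepsilon}$, using that $t\mapsto t^{\varepsilon}(\log\log\tfrac1t)^{\beta}$ is increasing near $0$. Summing the two families,
$$
\sum_ih_{n,\beta}\bigl(\diam f(B_i)\bigr)\;\le\;C'\!\!\int_{\bigcup_i2B_i}\!\!g\;+\;C''\delta^{\varepsilon}\Bigl(\log\log\tfrac1\delta\Bigr)^{\beta}\bigl(\mathcal H^{n-\varepsilon}(E)+1\bigr) ,
$$
and both terms tend to $0$ as $\delta\to0$: the first by absolute continuity of the integral (since $|\{x\in U:\dist(x,E)<4\delta\}|\to|E|=0$), the second by inspection. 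As $\{f(B_i)\}$ covers $f(E)$ by sets of arbitrarily small diameter once $\delta$ is small, letting $\delta\to0$ yields $\mathcal H^{h_{n,\beta}}(f(E))=0$ for every $\beta<cp$.
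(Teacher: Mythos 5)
Your Step~1 is false as stated, and this undermines everything built on it. Under sub-exponential (or even exponential) integrability of $K_f$ one does \emph{not} get $|Df|^n\in L^1_\loc$, let alone $|Df|^n\bigl(\log\log(e^e+|Df|)\bigr)^\beta\in L^1_\loc$: the sharp self-improvement (Clop--Koskela, \cite{clop}, which is what the actual proof invokes) only gives $|Df|\in L^{P_\beta}_\loc$ with $P_\beta(t)=t^n/\bigl(\log(e+t)\log^{1-\beta}(\log(e^e+t))\bigr)$, so a full power of $\log(e+t)$ survives in the denominator, $f\in W^{1,n}_\loc$ fails in general, and your weight $g$ need not be locally integrable. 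What \emph{is} locally integrable is the corresponding expression for the Jacobian, $J_f\log^\beta\log(e^e+J_f)$ for $\beta<c(n)p$ (a separate deduction, via \cite{ivaver}). So $g$ would at least have to be replaced by $J_f\bigl(\log\log(e^e+J_f)\bigr)^\beta$; but then comparing $\int_B J_f$ with the volume of $f(B)$ requires the identity $\Det Df=J_f$ (verified in the paper via \cite{koszho}, \cite{greco2}), which your proposal never addresses.

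Second, your Step~2 --- which you yourself call the analytic core and ``the main obstacle'' --- is a plan rather than a proof, and it is exactly the hard part. The oscillation estimate $\bigl(\diam f(B(x,r))\bigr)^n\bigl(\log\log\frac1r\bigr)^\beta\le C\int_{B(x,2r)}g$ is the \cite{HK03}-style route; the annulus/capacity iteration you sketch has to be run through the distortion inequality with only sub-exponentially integrable $K_f$, and it is precisely the step known to lose constants (and it is not carried out here). The paper avoids this estimate entirely by working on the image side: since $K_f^{(q-1)q/(2q-n)}\in L^1_\loc$, one gets $f^{-1}\in W^{1,q}_\loc$ for some $q\in\mathopen]n-1,n\mathclose[$ from \cite{HeKoMa}, decomposes $f(\Omega)$ into an $\mathcal H^{n-q/2}$-null set plus pieces $F_i$ on which $\diam f^{-1}(B(y,r))\le C_i r^{1/2}$, covers $f(E)\cap F_i$ by image balls via the $5r$-covering theorem, and bounds their volumes by $\int_{f^{-1}(B)}J_f$ using Lemma~3.2 of \cite{KKM}, splitting according to the size of $J_f$. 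Your Step~3 bookkeeping is sound in itself, but it rests on two unsupported inputs; to make the proposal into a proof you would have to either genuinely establish the oscillation estimate with the Jacobian weight or switch to the image-side argument.
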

When $n=2$, the assumption on $f$ to be a homeomorphism is not necessary due to Stoilow factorization. The constant $c$ equals one in this case:
\begin{theorem}\label{planar}
Let $f\in W^{1,1}_\loc(\Omega;\bR^2)$, $\Omega\subset\bR^2$, be a mapping of finite distortion with
$$
e^{\frac{K_f}{1+\log K_f}}\in L^p_\loc(\Omega).
$$
Then $\mathcal H^{h_{2,\beta}}(f(E))=0$ for all $\beta<p$, whenever $E\subset\Omega$ is such that $\dim_{\mathcal H}E<2$.
\end{theorem}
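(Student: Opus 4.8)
The plan is to reduce to a homeomorphism, establish a sharp oscillation estimate for it via the planar conformal modulus, and then run a covering argument in which the covering scales are chosen by a stopping time adapted to the distortion. First, by Stoilow factorization, on each subdomain compactly contained in $\Omega$ one may write $f=\varphi\circ g$ with $g$ a homeomorphism of finite distortion and $\varphi$ holomorphic; since $D\varphi$ is conformal almost everywhere one has $K_g=K_f$ a.e., so $g$ again satisfies the hypothesis of the theorem, and $\varphi$ is locally Lipschitz. As $h_{2,\beta}$ satisfies $h_{2,\beta}(ct)\le C(c)\,h_{2,\beta}(t)$ for fixed $c\ge 1$ and all small $t$, locally Lipschitz maps send $\mathcal{H}^{h_{2,\beta}}$-null sets to $\mathcal{H}^{h_{2,\beta}}$-null sets, so it suffices to prove $\mathcal{H}^{h_{2,\beta}}(g(E))=0$. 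Exhausting $\Omega$ and splitting $E$ accordingly, I may assume $E$ compact with $E\subset U\Subset\Omega$, where $e^{K_g/(1+\log K_g)}\in L^p(U)$ and $J_g\in L^1(U)$; fixing $d$ with $\dim_{\mathcal{H}}E<d<2$ gives $\mathcal{H}^d(E)=0$ and in particular $|E|=0$.

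For the oscillation estimate, fix $\rho_0>0$ with $2\rho_0<\dist(E,\bdary U)$. For $x\in E$ and $0<r<\rho_0$, apply the modulus inequality $\modulus(g\Gamma)\le\int K_g\rho^2$ (valid here, $\rho$ admissible for $\Gamma$) to the family $\Gamma$ of radial segments of the ring $B(x,\rho_0)\setminus\closure{B(x,r)}$ with admissible weight $\rho(y)=|y-x|^{-1}(\log(\rho_0/r))^{-1}$, obtaining $\modulus(g\Gamma)\le(\log(\rho_0/r))^{-2}I(x,r)$ with $I(x,r):=\int_{B(x,\rho_0)\setminus B(x,r)}|y-x|^{-2}K_g(y)\,dy$. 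Since $g(B(x,\rho_0)\setminus\closure{B(x,r)})$ is a ring separating the continuum $\closure{g(B(x,r))}$ from the complement of $g(B(x,\rho_0))$, a Teichm\"uller-type lower bound for ring moduli gives $\modulus(g\Gamma)\ge 2\pi/\log\bigl(\lambda\,\diam g(B(x,\rho_0))/\diam g(B(x,r))\bigr)$ for an absolute $\lambda$. Combining, and absorbing $\diam g(B(x,\rho_0))$ (bounded over $x\in E$ by uniform continuity of $g$) into a constant $C_1$,
\[
\diam g(B(x,r))\ \le\ C_1\exp\biggl(-\frac{2\pi\,(\log(\rho_0/r))^{2}}{I(x,r)}\biggr).
\]
Since $K_g\ge 1$ forces $I(x,r)\ge 2\pi\log(\rho_0/r)$, the exponent is a genuine contraction; and it is precisely the sharpness of the planar ring-modulus estimates used here that yields the optimal constant, explaining why the plane allows the factor $p$ in place of the dimensional $cp$ of Theorem~\ref{ndim}. (Alternatively one could first establish the Orlicz--Sobolev regularity of $g$ corresponding to sub-exponential distortion and appeal to a dimension-distortion estimate for such mappings, as is done for exponentially integrable distortion, but the present route displays the sharp constant most transparently.)

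For the covering argument, fix $\beta<p$ and choose $\Lambda\in(2\pi,4\pi/d)$, which is possible exactly because $d<2$. Call $x\in E$ \emph{good} if $I(x,r)\le\Lambda\log(\rho_0/r)$ for all sufficiently small $r$; then the oscillation estimate gives $\diam g(B(x,r))\le C_1(r/\rho_0)^{2\pi/\Lambda}$ with $4\pi/\Lambda>d$, so covering the good part of $E$ by small balls $B_i=B(x_i,r_i)$ about good points with $\sum_i r_i^d<\varepsilon$ (possible as $\mathcal{H}^d(E)=0$) makes $\sum_i h_{2,\beta}(\diam g(B_i))\lesssim\sum_i r_i^{4\pi/\Lambda}(\log\log(1/r_i))^{\beta}<\varepsilon$ once the $r_i$ are small enough. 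For a \emph{bad} $x$ let $r(x)=\sup\{r<\rho_0:\ I(x,r)\ge\Lambda\log(\rho_0/r)\}$; a Besicovitch subcover $\{B(x_i,r(x_i))\}$ of the bad part has bounded overlap, and comparing $I(x_i,r(x_i))$ with $I(x_i,2r(x_i))<\Lambda\log(\rho_0/2r(x_i))$ yields $\int_{B(x_i,2r(x_i))}(K_g-1)\gtrsim r(x_i)^{2}$. The main obstacle --- the step I expect to be hardest --- is to convert this last lower bound, together with the smallness of $\int_{U'}J_g$ and of $\int_{U'}e^{K_g/(1+\log K_g)}$ over a thin neighborhood $U'\supset E$ (available since $|E|=0$), into smallness of $\sum_i h_{2,\beta}(\diam g(B_i))$; this uses the full strength of the hypothesis. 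By Jensen's inequality for the (eventually convex) function $t\mapsto e^{pt/(1+\log t)}$, the $L^p$-integrability of $e^{K_g/(1+\log K_g)}$ forces $\meanint_{B(x_i,2r(x_i))}K_g$ to be of size at most about $\mathcal{A}^{-1}(\log(1/r(x_i)))$ with $\mathcal{A}(t)=pt/(1+\log t)$ (the density of $e^{\mathcal{A}(K_g)}\mathcal{L}^2$ on that ball being $\lesssim r(x_i)^{-2}$); since $\mathcal{A}^{-1}(s)\sim p^{-1}s\log s$, the distortion at scale $r(x_i)$ can grow only like $p^{-1}\log(1/r(x_i))\log\log(1/r(x_i))$, and it is the balance of this growth with the spreading $\diam g(B(x_i,r(x_i)))\asymp r(x_i)^{2\pi/\Lambda}$ that produces the double logarithm in $h_{2,\beta}$ and the sharp exponent $\beta<p$. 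Letting $\varepsilon\to 0$ gives $\mathcal{H}^{h_{2,\beta}}(g(E))=0$, hence the theorem.
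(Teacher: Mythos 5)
Your opening reduction (Stoilow factorization, so that it suffices to treat the homeomorphic factor $g$, with $K_g=K_f$ and $\varphi$ locally Lipschitz) is the same first step as the paper's. After that, however, your argument diverges onto a ring-modulus route, and it has a genuine gap exactly where you flag ``the hardest step''. For the bad balls your stopping time gives, at the stopping radius $r_i=r(x_i)$, only the H\"older-type bound $\diam g(B(x_i,r_i))\lesssim r_i^{2\pi/\Lambda}$ with $4\pi/\Lambda<2$ (forced by $\Lambda>2\pi$), together with smallness of $\sum_i r_i^{2}\lesssim\int_{U'}(K_g-1)$. Since $4\pi/\Lambda<2$, the quantity $\sum_i r_i^{4\pi/\Lambda}$ is \emph{not} controlled by $\sum_i r_i^{2}$ (the inequality goes the wrong way for $r_i<1$), so smallness of $\sum_i h_{2,\beta}(\diam g(B_i))$ over the bad family does not follow. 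The Jensen heuristic you offer to repair this does not close the gap either: sub-exponential integrability gives at best $\meanint_{A}K_g\lesssim\mathcal A^{-1}(\log(1/r))\sim p^{-1}\log(1/r)\log\log(1/r)$ on dyadic annuli $A$ at scale $r$, whence $I(x,r)\lesssim p^{-1}(\log(1/r))^{2}\log\log(1/r)$, and your oscillation estimate then yields only
$$
\diam g(B(x,r))\ \lesssim\ \exp\Bigl(-\frac{2\pi p}{\log\log(1/r)}\Bigr)\ \longrightarrow\ 1,
$$
i.e.\ no decay whatsoever. A pointwise modulus-of-continuity bound of this kind cannot by itself yield the theorem, and in particular the claimed ``balance'' producing the exponent $\beta<p$ is not substantiated. (A secondary issue: the inequality $\modulus(g\Gamma)\le\int K_g\rho^2$ for mappings of merely sub-exponentially integrable distortion is true but not free; it needs the Orlicz--Sobolev regularity of $g$, which you never establish.)

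The missing ingredient is the paper's main analytic input: the sharp higher integrability of the Jacobian, $J_g\log^\beta\log(e^e+J_g)\in L^1_\loc$ for every $\beta<p$ (Gill's theorem in the plane). With that, one covers $g(E)$ by balls $B(y_k,\rho_k)$ \emph{on the image side}, uses $\mathcal L^2(B(y_k,\rho_k))\le\int_{g^{-1}(B(y_k,\rho_k))}J_g$ and the $\tfrac12$-H\"older bound $\diam g^{-1}(B(y,r))\le C_y r^{1/2}$ (valid off an exceptional set of $\mathcal H^{3/2}$-measure zero) to convert $\sum_k\rho_k^2\log^\beta\log(1/\rho_k)$ into $\int J_g\log^\beta\log(e^e+J_g)$ over a thin neighbourhood of $E$, which is small by absolute continuity; a separate term handles the region where $J_g$ is small using $\mathcal H^{d}(E)=0$. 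That integral estimate, not a ring-modulus estimate, is what produces the sharp constant $\beta<p$. If you wish to keep your good/bad decomposition you would still need the higher integrability of $J_g$ to treat the bad balls, at which point the decomposition becomes redundant.
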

The following example shows that Theorem~\ref{planar} is essentially sharp:
\begin{example}\label{ex}
For any $\beta>0$ and $\varepsilon\in]0,\beta[$, there exist sets $\mathcal C,\,\,\mathcal C^\prime\subset[0,1]^2$, such that $\dim_{\mathcal H}\mathcal C<2$ and $\mathcal H^{h_{2,\beta}}(\mathcal C^\prime)>0$, and a mapping $f\in W^{1,1}([0,1]^2;\bR^2)$, such that
$$
e^{\frac{K_f}{1+\log K_f}}\in L^{\beta-\varepsilon}_\loc(\Omega)
$$
and $f(\mathcal C)=\mathcal C^\prime$.
\end{example}
This example can be extended to higher dimensions. In this case, the gauge function for the image set $\mathcal C^\prime$ is $h_{n,\beta}$ and the distortion of $f$ satisfies the same sub-exponential integrability condition. Thus, one may expect that the sharp value of the constant $c$ in Theorem~\ref{ndim} is one as well.

The main auxillary result, used in the proof of the theorems, is higher integrability for the Jacobian of a mapping of sub-exponentially integrable distortion, proved in~\cite{clop} for general dimensions and refined in~\cite{gill}, where a sharp estimate for the higher integrability of the Jacobian of a planar mapping was obtained. Those estimates are combined with the methods used in~\cite{KZZ1, tapio} for the case of exponentially integrable distortion.

One could extend the results presented here to a case of a more general function $\mathcal A$, in particular, when $\mathcal A$ is given by
\begin{multline*}
\mathcal A_{p,k}(t)\\=\frac{pt}{1+\log(t)\log(\log(e-1+t))\cdots\log(\ldots(\log(e^{e^{\cdot^{\cdot^{\cdot^e}}}}-1+t))\ldots)}-p,
\end{multline*}
where $k$ means that the last logarithmic expression is a $k$-th iterated logarithm (a case, studied in~\cite[Theorem~4]{gill}). However, we leave the results in the presented form, because the construction demonstrating sharpness is quite complicated even in the case of a single logarithm.

\section{Definitions}

Let us agree on some notation. For a set $V\subset\bR^n$ and a number $\delta>0$, $V+\delta$ denotes
the set $\{y\in\bR^n\,|\dist(y,V)<\delta\}$.

Always when we introduce a constant using the notation $C = C(\cdot)$, we mean that the
constant $C$ depends only on the parameters listed in the parantheses.

We write $\mathcal{H}^h(A)$ for the \emph{generalized Hausdorff
measure} of a set $A$, given by
$$
\mathcal{H}^h(A)=\lim_{\delta\to0}\mathcal{H}^h_\delta(A),
$$
where
$$
\mathcal{H}^h_\delta(A)=\inf\Bigl\{\sum\limits_{i=1}^{\infty}
h(\diam U_i)\colon A\subset\bigcup\limits_{i=1}^{\infty} U_i,
\diam U_i\leq\delta\Bigr\}
$$
and $h$ is a dimension gauge (non-decreasing, $\lim_{t\to0+}h(t)=h(0)=0$). If
$h(t)=t^\alpha$ for some $\alpha\geq0$, we simply put
$\mathcal{H}^\alpha$ for $\mathcal{H}^{t^\alpha}$ and call it the
\emph{Hausdorff $\alpha$-dimensional measure} and the
\emph{Hausdorff dimension} $\dim_{\mathcal{H}}A$ of the set $A$
is the smallest $\alpha_0\geq0$ such that
$\mathcal{H}^\alpha(A)=0$ for any $\alpha>\alpha_0$.

Let us recall the definition of Orlicz classes. An \emph{Orlicz function} is a continuous increasing function $P\colon[0,\infty[\to[0,\infty[$ such that $P(0)=0$ and $\lim_{t\to\infty}P(t)=\infty$. Given an Orlicz function $P$, we denote by $L^P(\Omega)$ the \emph{Orlicz class} of integrable functions $h:\Omega\to\bR $ such that
$$
\int_\Omega P(\nu|h|)<\infty
$$
for some $\nu=\nu(f)>0$. An \emph{Orlicz-Sobolev class} $W^{1,P}(\Omega)$ is a class of mappings $g\in W^{1,1}(\Omega,\bR^2)$ such that all the partial derivatives of $g$ are in the class $L^P(\Omega)$.

Finally, given a mapping $f\in W_\loc^{1,1}(\Omega, \bR^n)$,  we write the equality $\Det Df=J_f$, if the distributional determinant $\Det Df$~\cite{ball} coincides with the pointwise Jacobian $J_f$, that is, if
$$
\int_\Omega f_1(x)J_{\tilde{f}}(x)dx=-\int_\Omega\varphi(x)J_f(x)dx
$$
holds for each $\varphi\in C_0^\infty(\Omega)$ (here $f=(f_1,\ldots,f_n)$ and $\tilde{f}=(\varphi,f_2,\ldots,f_n)$). See~\cite{iwasbo,greco,greco2,koszho} for some conditions on the regularity of the weak derivatives of $f$ sufficient to guarantee this equality.

\section{Example}

Fix $\beta>0$. Let us construct the mapping in Example~\ref{ex}. We start by defining the pre-image and image Cantor sets $\mathcal C$ and $\mathcal C^\prime$, respectively. Fix $\sigma\in]0,1/2[$. The set $\mathcal C$ is obtained as a Cartesian product $\mathcal C_1\times\mathcal C_1$, where $\mathcal C_1$ is a Cantor set on the real line. In order to construct $\mathcal C_1$, take a unit segment $I=[0,1]$ and divide it into eight equal parts. Consider eight intervals $I^3_j$, $j=1,\ldots,8$, of length $\sigma^3$, each taken in the middle of one of the obtained segments. At the further steps, the intervals considered are always divided into two parts. Given $2^{k}$, $k\geq3$, intervals $I^k_j$, $j=1,\ldots,2^{k}$, of length $\sigma^k$, we divide each of them into two parts and take $2^{k+1}$ intervals $I^{k+1}_j$, $j=1,\ldots,2^{k+1}$, of length $\sigma^{k+1}$, each in the middle of one of the obtained parts. Finally, $\mathcal C_1$ is taken as $\bigcap\limits_{k\geq3}\bigcup\limits_{j=1}^{2^k}I^k_j$. The Hausdorff measure $\mathcal H^\alpha(\mathcal C_1)$ of the set $\mathcal C_1$ for $\alpha\in]\frac{\log2}{\log(1/\sigma)},1[$ may be estimated as
$$
\mathcal H^\alpha(\mathcal C_1)\leq\inf_{k\geq3}\{2^k\sigma^{\alpha k}\}=0,
$$
so, $\dim_{\mathcal H}\mathcal C_1<1$, and thus, $\dim_{\mathcal H}(\mathcal C_1\times\mathcal C_1)<2$.

The image set $\mathcal C^\prime$ is constructed similarly, but at the $k$-th step, $k\geq3$, the length of the intervals chosen is $l_k=2^{-k}\log^{-\beta/2}k$ instead of $\sigma^k$. For any $k\geq3$, the set $\mathcal C^\prime$ can be covered by $2^{2k}$ squares of side length $l_k$. We have 
$$
\lim_{k\to\infty}2^{2k}h_{2,\beta}(l_k)=\lim_{k\to\infty}2^{2k}l^2_k(\log\log(1/l_k))^\beta=1,
$$
so the mass distribution principle gives us $\mathcal H^{h_{2,\beta}}(\mathcal C^\prime)>0$; indeed, put $m:=\inf_{k\geq3}\{2^{2k}h_{2,\beta}(l_k)\}>0$ and let $\mu$ be the uniformly distributed probability measure supported by $\mathcal C^\prime$. Suppose also that $\delta>0$ is so small that $h_{2,\beta}(t)$ is increasing in $t$ on the interval $]0,\delta[$. Then for any $U\subset\bR^2$ such that $l_{k+1}\leq\diam U<\min\{\delta,l_k\}$ for some $k\geq3$, we have
$$
\mu(U)\leq2^{-2k}\leq\frac{4h_{2,\beta}(l_{k+1})}{m}\leq\frac{4h_{2,\beta}(\diam U)}{m}.
$$
Thus, for any covering $\bigcup_i U_i$ of the set $\mathcal C^\prime$, such that $\diam U_i<\min\{\delta,l_3\}$, $i=1,2,\ldots$, we observe
$$
\sum\limits_{i=1}^\infty h_{2,\beta}(\diam U_i)\geq\frac{m}{4}\sum\limits_{i=1}^\infty\mu(U_i)
\geq\frac{m}{4}\mu\Bigl(\bigcup\limits_{i=1}^\infty U_i\Bigr)=\frac{m}{4}>0.
$$

Let us denote by $Q_{k,j}$ with $k=3,4,\ldots$ and $j=1,\ldots,2^{2k}$ the squares of the side length $\sigma^k$, appearing on the pre-image side at the $k$-th step of the construction. Write $q_{k,j}$ for the centres of these squares. Next, let $A_{k,j}$ for $k=3,4,\ldots$ and $j=1,\ldots,2^{2k}$ denote the frames
$$
\{x\in\bR^2\colon r_k<|x-q_{k,j}|_\infty<R_k\},
$$
where $r_k=\sigma^k/2$ for $k\geq3$, $R_k=\sigma^{k-1}/4$ for $k\geq4$, $R_3=1/16$ and $|\cdot|_\infty$ is the maximum norm:
$$
|x|_\infty=\max\{|x_1|,|x_2|\}.
$$ 
The inner boundary $\{x\in\bR^2\colon|x-q_{k,j}|_\infty=r_k\}$ of the frame $A_{k,j}$ is exactly the boundary of the square $Q_{k,j}$. Let us introduce similar notation for the image side. Write $Q^\prime_{k,j}$ with $k=3,4,\ldots$ and $j=1,\ldots,2^{2k}$ for the squares with the side length $l_k=2^{-k}\log^{-\beta/2}k$ and $q^\prime_{k,j}$ for the centres of these squares. Finally, $A^\prime_{k,j}$ for $k=3,4,\ldots$ and $j=1,\ldots,2^{2k}$ denote the frames
$$
\{x\in\bR^2\colon r^\prime_k<|x-q^\prime_{k,j}|_\infty<R^\prime_k\},
$$
where $r_k^\prime=2^{-k+1}\log^{-\beta/2}k$ for $k\geq3$, $R_k^\prime=2^{-k+1}\log^{-\beta/2}(k-1)$ for $k\geq4$ and $R_3^\prime=1/16$.

We are ready to construct a mapping $f\colon[0,1]^2\to\bR^2$ such that $f(\mathcal C)=\mathcal C^\prime$. The construction is similar to the one in~\cite[Proposition~5.1]{HK03}. First, let
$$
a_k=\frac{R_k^\prime-r_k^\prime}{R_k-r_k}\,\,\,\,\text{ and }\,\,\,\,
b_k=\frac{R_kr_k^\prime-R_k^\prime r_k}{R_k-r_k},
$$
for $k\geq3$. Then, define $f_3$ as
$$
f_3(x)=
\begin{cases}
(a_3|x-q_{3,j}|_\infty+b_3)\frac{x-q_{3,j}}{|x-q_{3,j}|_\infty}+q^\prime_{3,j},& x\in \closure A_{3,j}, j=1,\ldots,64,\\
\frac{r_3^\prime}{r_3}(x-q_{3,j})+q^\prime_{3,j},& x\in Q_{3,j}, j=1,\ldots,64.
\end{cases}
$$
We proceed by putting
$$
f_k(x)=
\begin{cases}
(a_k|x-q_{k,j}|_\infty+b_k)\frac{x-q_{k,j}}{|x-q_{k,j}|_\infty}+q^\prime_{k,j},& x\in A_{k,j}, j=1,\ldots,2^{2k},\\
\frac{r_k^\prime}{r_k}(x-q_{k,j})+q^\prime_{k,j},& x\in \closure Q_{k,j}, j=1,\ldots,2^{2k},\\
f_{k-1}(x),& \text{otherwise},
\end{cases}
$$
for $k>3$. The mapping $f$ is obtained as a pointwise limit \linebreak$f=\lim_{k\to\infty}f_k$.

It is a Sobolev mapping. Indeed, let us first see that it is ACL (absolutely continuous on lines). Take a line on the pre-image side parallel to the $x_1$-axis that does not hit the initial Cantor set $\mathcal C$. On this line, the mapping $f$ coincides with one of the mappings $f_{k_0}$ in our sequence, which is Lipschitz and, therefore, absolutely continuous along the considered line. Since $\mathcal C_1$ has vanishing Lebesgue measure $\mathcal L^1$, it follows that $f$ is ACL. Next, let us check the integrability of the differential of $f$. Its behaviour is essentially defined by the behaviour of $f$ on cubical collars $A_{k,j}$, where it is given by $$(a_k|x|_\infty+b_k)\frac{x}{|x|_\infty},\,\,\,\text{    }r_k<|x|_\infty<R_k$$
up to a translation. Further calculations show that
$$
|Df(x)|=|Df_k(x)|=\max\Bigl\{a_k,a_k+\frac{b_k}{|x-q_{k,j}|_\infty}\Bigr\}\,\,\,\text{ for a. e. }x\in A_{k,j}.
$$
Since $b_k>0$ for large $k$, we have $|Df(x)|=a_k+\frac{b_k}{|x-q_{k,j}|_\infty}\leq r^\prime_k/r_k$ for almost every $x\in A_{k,j}$, when $k$ is large enough. So, the integrability of the differential of $f$ can be estimated with help of the following series:
$$
\int_{[0,1]^2}|Df|\leq C_1\sum\limits_{k=3}^\infty (2\sigma)^{2(k-1)}\frac{2^{-k+2}\log^{-\beta/2}k}{\sigma^k}=
C_2\sum\limits_{k=3}^\infty (2\sigma)^k\log^{-\beta/2}k,
$$
where $C_1=C_1(\sigma,\beta)$ and $C_2=C_2(\sigma,\beta)$ are positive constants. This series converges by the Ratio Test, since
$$
\lim_{k\to\infty}\frac{\log^{-\beta/2}(k+1)}{\log^{-\beta/2}k}=1<\frac{1}{2\sigma}.
$$
So, we have $Df\in L^1$ and therefore $f\in W^{1,1}$.

The Jacobian of $f$ is integrable as a Jacobian of a Sobolev homeomorphism (see, for example,~\cite[Corollary~3.3.6]{book}).

Finally, let us examine the sub-exponential integrability of the distortion function of $f$. The Jacobian of $f$ is given by
$$
J_{f_k}(x)=a_k\Bigl(a_k+\frac{b_k}{|x-q_{k,j}|_\infty}\Bigr)
$$
at almost every $x\in A_{k,j}$. Thus, $K_f$ is defined by
\begin{equation}\label{Kk}
K_{f_k}(x)=1+\frac{b_k}{a_k|x-q_{k,j}|_\infty}\leq\frac{1-2\sigma}{2\sigma}\frac{1}{\bigl(\frac{\log k}{\log(k-1)}\bigr)^{\beta/2}-1}=\colon K_k
\end{equation}
for almost every $x\in A_{k,j}$, when $k$ is large enough. This gives the estimate
$$
\int_{[0,1]^2}\exp\Bigl(\frac{pK_f}{1+\log K_f}\Bigr)\leq C\sum\limits_{k=3}^\infty(2\sigma)^{2(k-1)}\exp\Bigl(\frac{pK_{k}}{1+\log K_{k}}\Bigr)
$$
with a constant $C=C(\sigma,\beta)>0$. By Lemma~\ref{lem:limit} below,
\begin{equation}\label{eq:limit}
\lim_{k\to\infty}\frac{\exp\bigl(\frac{pK_{k+1}}{1+\log K_{k+1}}\bigr)}{\exp\bigl(\frac{pK_{k}}{1+\log K_{k}}\bigr)}
=\exp\Bigl(p\frac{1-2\sigma}{2\sigma}\frac{2}{\beta}\Bigr),
\end{equation}
and thus, by the Ratio Test, the series above converges provided
$$
\exp\Bigl(p\frac{1-2\sigma}{2\sigma}\frac{2}{\beta}\Bigr)<(2\sigma)^{-2}.
$$
So, we have
$$
e^{\frac{K_f}{1+\log K_f}}\in L^p_\loc(\Omega)
$$
for all $p<p_0=\beta\frac{2\sigma}{1-2\sigma}\log\frac{1}{2\sigma}$. Choosing $\sigma$ close enough to 1/2, we can make $p_0$ as close to $\beta$ as we wish.

The following lemma verifies~\eqref{eq:limit}.
\begin{lemma}\label{lem:limit}
We have
$$
\lim_{k\to\infty}\frac{\exp\bigl(\frac{pK_{k+1}}{1+\log K_{k+1}}\bigr)}{\exp\bigl(\frac{pK_{k}}{1+\log K_{k}}\bigr)}
=\exp\Bigl(p\frac{1-2\sigma}{2\sigma}\frac{2}{\beta}\Bigr),
$$
where $K_k$ is as defined in~\eqref{Kk}.
\end{lemma}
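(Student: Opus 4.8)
The quotient in the statement equals $\exp\bigl(p(a_{k+1}-a_k)\bigr)$, where $a_k:=\dfrac{K_k}{1+\log K_k}$, so it suffices to show that $a_{k+1}-a_k\to c$ with $c:=\dfrac{1-2\sigma}{2\sigma}\cdot\dfrac{2}{\beta}$; the claimed limit is then $\exp(pc)$. The plan is to realize $a_k$ as the values of a smooth function of a real variable and estimate its derivative. For $t\ge3$ put
$$
G(t)=\Bigl(\tfrac{\log t}{\log(t-1)}\Bigr)^{\beta/2}-1>0,\qquad \kappa(t)=\tfrac{1-2\sigma}{2\sigma}\cdot\tfrac{1}{G(t)},\qquad F(t)=\tfrac{\kappa(t)}{1+\log\kappa(t)} ,
$$
so that $F(k)=a_k$ for every integer $k\ge3$. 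For $t$ large enough $F$ is $C^1$ (indeed $\kappa(t)\to\infty$, see below), so the mean value theorem gives $a_{k+1}-a_k=F'(\xi_k)$ for some $\xi_k\in(k,k+1)$, and it is enough to prove $F'(t)\to c$ as $t\to\infty$.

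Differentiating the composition, $F'(t)=\kappa'(t)\cdot\dfrac{\log\kappa(t)}{(1+\log\kappa(t))^2}$. I would first record the elementary expansion $\log t-\log(t-1)=-\log(1-1/t)=\tfrac1t+O(t^{-2})$, whence $\tfrac{\log t}{\log(t-1)}=1+\tfrac{1}{t\log(t-1)}(1+o(1))$ and, by the binomial expansion of $(1+x)^{\beta/2}$,
$$
G(t)=\tfrac{\beta}{2t\log(t-1)}\,(1+o(1)).
$$
Hence $\kappa(t)=c\,t\log(t-1)\,(1+o(1))\to\infty$, so $\log\kappa(t)=\log t+o(\log t)$ and $\dfrac{\log\kappa(t)}{(1+\log\kappa(t))^2}=\dfrac{1}{\log t}(1+o(1))$.

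The main point is the asymptotics of $\kappa'(t)=-\dfrac{1-2\sigma}{2\sigma}\cdot\dfrac{G'(t)}{G(t)^2}$. Writing $u(t)=\log t-\log(t-1)$, so that $u(t)=\tfrac1t(1+o(1))$ and $u'(t)=\tfrac1t-\tfrac1{t-1}=-\tfrac{1}{t(t-1)}$, and $G(t)=\bigl(1+u(t)/\log(t-1)\bigr)^{\beta/2}-1$, one finds that $\tfrac{d}{dt}\bigl(u(t)/\log(t-1)\bigr)$ has numerator $u'(t)\log(t-1)-u(t)/(t-1)$, dominated by the term $u'(t)\log(t-1)\sim-\log(t-1)/t^2$, so that
$$
G'(t)=\tfrac{\beta}{2}\cdot\tfrac{-1}{t^2\log(t-1)}\,(1+o(1)) .
$$
Dividing by $G(t)^2=\tfrac{\beta^2}{4t^2\log^2(t-1)}(1+o(1))$ gives $\kappa'(t)=\tfrac{1-2\sigma}{2\sigma}\cdot\tfrac{2}{\beta}\log(t-1)\,(1+o(1))=c\log t\,(1+o(1))$. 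Combining this with the previous paragraph, $F'(t)=c\log t\,(1+o(1))\cdot\tfrac{1}{\log t}(1+o(1))\to c$, as required.

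The only delicate issue is the cancellation hidden in $\kappa'$ — equivalently, in $K_{k+1}-K_k$, whose leading terms cancel: one must not merely substitute asymptotic equivalents for $K_{k+1}$ and $K_k$ and subtract, but instead pass to the differentiable extension $\kappa$ and carry the $(1+o(1))$ factors through the computation of $G$ and $G'$. Everything else — the Taylor expansions of $\log(1-1/t)$ and of $(1+x)^{\beta/2}$, and the verification that the subdominant terms are genuinely of lower order — is routine bookkeeping.
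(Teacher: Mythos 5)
Your proof is correct, and it rests on the same core device as the paper's: convert the difference of consecutive terms into a derivative via the mean value theorem and then compute the asymptotics of that derivative. The organization differs slightly. The paper first expands the difference $\frac{pK_{k+1}}{1+\log K_{k+1}}-\frac{pK_k}{1+\log K_k}$ algebraically into a fraction whose denominator tends to $1$, and then applies the mean value theorem separately to the two pieces $1/T_t$ and $\frac{1}{T_t}\log^{-1}(\alpha/T_t)$, showing one contribution vanishes and the other tends to $2/\beta$; the resulting derivative estimates are carried out with explicit inequalities rather than $o(1)$ bookkeeping. You instead apply the mean value theorem once to the full composite $F(t)=\kappa(t)/(1+\log\kappa(t))$ and compute $F'(t)=\kappa'(t)\,\frac{\log\kappa(t)}{(1+\log\kappa(t))^2}\to c$ directly from the expansions of $G$ and $G'$. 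Your route is somewhat cleaner, since the factor $\frac{\log\kappa}{(1+\log\kappa)^2}\sim\frac{1}{\log t}$ automatically cancels the $\log t$ growth of $\kappa'$, so you never need the paper's two-term split; the paper's version, in exchange, yields explicit quantitative bounds at each stage. Your identification of the delicate point (the cancellation in $\kappa'$, i.e., that one must not subtract asymptotic equivalents of $K_{k+1}$ and $K_k$ directly) is exactly the issue the paper's repeated use of the mean value theorem is designed to handle, and your expansions $u'(t)\log(t-1)\sim-\log(t-1)/t^2$ dominating $u(t)/(t-1)$, and $G(t)^2\sim\frac{\beta^2}{4t^2\log^2(t-1)}$, are all accurate.
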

\begin{proof}
Straightforward calculations give us
\begin{multline*}
\frac{pK_{k+1}}{1+\log K_{k+1}}-\frac{pK_{k}}{1+\log K_{k}}
\\=p\alpha\frac{\bigl(\frac{1}{T_{k+1}}-\frac{1}{T_{k}}\bigr)\log^{-1}\frac{\alpha}{T_{k+1}}
\log^{-1}\frac{\alpha}{T_{k}}+\frac{1}{T_{k+1}}\log^{-1}\frac{\alpha}{T_{k+1}}-
\frac{1}{T_{k}}\log^{-1}\frac{\alpha}{T_{k}}}{1+\log^{-1}\frac{\alpha}{T_{k+1}}\log^{-1}\frac{\alpha}{T_{k}}
+\log^{-1}\frac{\alpha}{T_{k+1}}+\log^{-1}\frac{\alpha}{T_{k}}},
\end{multline*}
where $\alpha=(1-2\sigma)/(2\sigma)$ and $T_t=(\log t/\log(t-1))^{\beta/2}-1$ for $t\in[3,\infty[$. Notice that $T_t\to0$ as $t\to\infty$. Thus, in order to prove this lemma, it is enough to show that the numerator of the fraction above goes to $2/\beta$ as $k$ tends to infinity. We demonstrate it by the following two observations:
$$
\lim_{k\to\infty}\Bigl(\frac{1}{T_{k+1}}-\frac{1}{T_{k}}\Bigr)\log^{-1}\frac{\alpha}{T_{k+1}}
\log^{-1}\frac{\alpha}{T_{k}}=0
$$
and
$$
\lim_{k\to\infty}\Bigl(\frac{1}{T_{k+1}}\log^{-1}\frac{\alpha}{T_{k+1}}-
\frac{1}{T_{k}}\log^{-1}\frac{\alpha}{T_{k}}\Bigr)=\frac{2}{\beta}.
$$
The main tool here is the mean-value theorem. Let us first examine the difference $\frac{1}{T_{k+1}}-\frac{1}{T_{k}}$. There exists a sequence $\{\zeta_k\}_{k=3}^\infty$ of numbers between 0 and 1 such that
$$
\frac{1}{T_{k+1}}-\frac{1}{T_{k}}=u(k+1)-u(k)=u^\prime(k+\zeta_k),
$$
where
$$
u(t)=\frac{\log^{\beta/2}(t-1)}{\log^{\beta/2}t-\log^{\beta/2}(t-1)}.
$$
We have
$$
u^\prime(t)=\frac{\beta}{2}\frac{\log^{\beta/2}(t-1)\log^{\beta/2}t(\frac{1}{t-1}\log^{-1}(t-1)
-\frac{1}{t}\log^{-1}t)}{(\log^{\beta/2}t-\log^{\beta/2}(t-1))^2}.
$$
We apply the mean-value theorem again in order to replace the differences both in the numerator and in the denominator with multiplicative terms. We obtain for $t>3$
\begin{align*}
u^\prime(t)&=\frac{2}{\beta}\frac{(t-\theta_t)^2}{(t-\eta_t)^2}\frac{\log^{\beta/2}(t-1)\log^{\beta/2}t
(\log(t-\eta_t)+1)}{\log^{\beta-2}(t-\theta_t)\log^2(t-\eta_t)}\\
&<\frac{2}{\beta}\frac{t^2}{(t-1)^2}\frac{\log^{\beta+2} t(\log t+1)}{\log^{\beta+2}(t-1)}
<\frac{18\cdot2^\beta}{\beta}(\log t+1),
\end{align*}
where $\eta_t,\theta_t\in]0,1[$.

Next, let us observe that
\begin{align}\label{1overT}
\frac{1}{T_t}=\frac{\log^{\beta/2}(t-1)}{\log^{\beta/2}t-\log^{\beta/2}(t-1)}&=\frac{2}{\beta}
\frac{(t-\delta_t)\log^{\beta/2}(t-1)}{\log^{\beta/2-1}(t-\delta_t)}\\\nonumber&=\frac{2}{\beta}(t-\delta_t)
M_t\log(k-\delta_k),
\end{align}
where $\delta_t\in]0,1[$ and $M_t=(\log(t-1)/\log(t-\delta_t))^{\beta/2}\to1$ as $t\to\infty$. Finally, we obtain for large $k$
\begin{align*}
0&<\Bigl(\frac{1}{T_{k+1}}-\frac{1}{T_{k}}\Bigr)\log^{-1}\frac{\alpha}{T_{k+1}}
\log^{-1}\frac{\alpha}{T_{k}}\\
&<\frac{18\cdot2^\beta}{\beta}\frac{\log(k+1)+1}{(\log(k-1)+\log(\frac{2\alpha}{\beta}M_k\log(k-1)))(\log k+\log(\frac{2\alpha}{\beta}M_{k+1}\log k))}\\
&<\frac{18\cdot2^\beta}{\beta}\frac{\log(k+1)+1}{\log^2(k-1)}\to0
\end{align*}
as $k\to\infty$.

It remains to examine the difference
$$
\frac{1}{T_{k+1}}\log^{-1}\frac{\alpha}{T_{k+1}}-\frac{1}{T_{k}}\log^{-1}\frac{\alpha}{T_{k}}
=v(k+1)-v(k),
$$
where $v(t)=\frac{1}{T_t}\log^{-1}\frac{\alpha}{T_t}$. Obviously, it is enough to prove that\linebreak $\lim_{t\to\infty}v^\prime(t)=2/\beta$. Let us calculate
\begin{align*}
v^\prime(t)&=\frac{\beta}{2}\frac{\log^{\beta/2-1}t}{\log^{\beta/2+1}(t-1)}\frac{t\log t-(t-1)\log(t-1)}{t(t-1)}
\frac{1-\log^{-1}\frac{\alpha}{T_t}}{T_t^2\log\frac{\alpha}{T_t}}\\
&=\frac{\beta}{2}\frac{\log^{\beta/2-1}t(\log(t-\kappa_t)+1)}{t(t-1)\log^{\beta/2+1}(t-1)}
\frac{1-\log^{-1}\frac{\alpha}{T_t}}{T_t^2\log\frac{\alpha}{T_t}}\\
&=\frac{\beta}{2}N_t\frac{1-\log^{-1}\frac{\alpha}{T_t}}{t(t-1)T_t^2\log(t-1)\log\frac{\alpha}{T_t}},
\end{align*}
where $\kappa_t\in]0,1[$ and $N_t\to1$ as $t\to\infty$. We use the representation~\eqref{1overT} again to obtain
$$
v^\prime(t)=\frac{2}{\beta}\frac{(t-\delta_t)^2}{t(t-1)}
\frac{N_tM_t^2\log^2(t-\delta_t)(1-\log^{-1}\frac{\alpha}{T_t})}
{\log(t-1)(\log(t-\delta_t)+\log(\frac{2\alpha}{\beta}M_t\log(t-\delta_t)))}\to\frac{2}{\beta}
$$
as $t\to\infty$.
\end{proof}

\section{Proof of Theorem~\ref{ndim}}

Without loss of generality, we may assume for the rest of the paper that $\Omega$ is connected. Moreover, using the $\sigma$\nobreakdash-additivity of the generalized Hausdorff measure, we may assume in what follows, that $\Omega$ is bounded and $e^{\frac{pK_f}{1+\log K_f}}$ is globally integrable in $\Omega$. We will use a higher integrability result for the Jacobian from~\cite{clop} to establish the desired dimension distortion estimate.
\begin{proof}[Proof of Theorem~\ref{ndim}]
Corollary~3.3 from~\cite{clop} gives us a constant $c=c(n)>0$ such that $|Df|\in L_\loc^{P_\beta}(\Omega)$ for all $\beta<cp$, where 
$$
P_\beta(t)=\frac{t^n}{\log(e+t)\log^{1-\beta}(\log(e^e+t))}.
$$
Corollary~9.1 from~\cite{ivaver} implies in turn that $J_f\log^\beta\log(e^e+J_f)\in L^1_\loc(\Omega)$ for all $\beta<cp$. Fix some $q\in]n-1,n[$. The integrability of the differential of $f$ guarantees that $f\in W^{1,q}(\Omega)$. In order to conclude $f^{-1}\in W^{1,q}_\loc(f(\Omega))$ by~\cite[Theorem~4.2]{HeKoMa}, we also need $K_f^{\frac{(q-1)q}{2q-n}}$
to be integrable in $\Omega$, which is clearly true as $K_f$ is sub-exponentially integrable. Finally, the regularity of the weak derivatives of $f$ is enough to guarantee $\Det Df=J_f$, since the function $P_\beta$ satisfies the assumptions (i) and (ii) of Theorem~1.2 in~\cite{koszho}. The desired equality $\Det Df=J_f$ follows also from the remark in~\cite[p.~594]{greco2} All this makes the application of Lemma~\ref{lemma} possible, concluding the proof of the theorem.
\end{proof}
\begin{lemma}\label{lemma}
Let $f\in W^{1,q}_\loc(\Omega;\bR^n)$, $\Omega\subset\bR^n$ ($n\geq2$ and $p>n-1$), be a homeomorphism, such that $\Det Df=J_f$, $J_f(x)\geq0$ for almost every $x\in\Omega$ and $J_f\log^\beta\log(e^e+J_f)\in L^1_\loc$ for some $\beta$. If $n>2$, assume in addition that $f^{-1}\in W^{1,q}_\loc(\Omega,\bR^n)$ for some $q\in]n-1,n[$. Then $\mathcal H^{h_{n,\beta}}(f(E))=0$, whenever $E\subset\Omega$ is such that $\dim_{\mathcal H}E<n$.
\end{lemma}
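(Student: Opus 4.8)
The plan is to reduce to a covering estimate and then push the integrability of $J_f\log^\beta\log(e^e+J_f)$ through $f$, ball by ball. First I would localize: by $\sigma$\nobreakdash-additivity of $\mathcal H^{h_{n,\beta}}$ we may assume that $E$ lies in a ball $B_0$ with $\closure{2B_0}\subset\Omega$ and that $J_f\log^\beta\log(e^e+J_f)\in L^1(2B_0)$, and --- since $\dim_{\mathcal H}E<n$ --- fix $s\in\,]n-1,n[\,$ with $\mathcal H^s(E)=0$. It then suffices to produce, for every $\varepsilon>0$, a countable family of balls $B_i=B(x_i,r_i)$ with $\closure{4B_i}\subset B_0$ such that $f(E)\subset\bigcup_if(\closure{B_i})$ and $\sum_ih_{n,\beta}(\diam f(B_i))<\varepsilon$; one starts from an efficient cover of $E$ by balls with $\sum_ir_i^s<\delta$.

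The heart of the proof is a single-ball estimate: for almost every $x$ and all small $r$ there is a radius $\rho\in[r,2r]$ with
$$ h_{n,\beta}\bigl(\diam f(B(x,\rho))\bigr)\ \le\ C\,r^{s}\ +\ C\int_{B(x,2r)}J_f(y)\,\log^\beta\log(e^e+J_f(y))\,dy . $$
To prove it, distinguish two cases. If $h_{n,\beta}(\diam f(B(x,\rho)))\le C r^s$ the first term suffices; this covers in particular the undistorted regime $\diam f(B(x,\rho))\lesssim r$, since $n>s$ makes $r^n(\log\log(1/r))^\beta\le r^s$ at small scales. Otherwise $h_{n,\beta}(\diam f(B(x,\rho)))>Cr^s$ forces $\diam f(B(x,\rho))$ to exceed a fixed power $r^{s/n}$ of $r$ (up to a $(\log\log(1/r))$-factor), so that $f$ distorts $B(x,\rho)$ substantially. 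In that regime I would combine: (i) the change-of-variables identity $|f(A)|=\int_AJ_f$, valid because $f$ is a Sobolev homeomorphism with $\Det Df=J_f\ge0$ and hence satisfies Lusin's condition~N; (ii) a spherical oscillation estimate --- since $q>n-1=\dim\bdary{B(x,t)}$, for a.e.\ $t$ the trace $f|_{\bdary{B(x,t)}}$ lies in $W^{1,q}(\bdary{B(x,t)})$ and Morrey's embedding on the sphere gives $\diam f(B(x,t))^q\le C\,t^{q-(n-1)}\int_{\bdary{B(x,t)}}|Df|^q\,d\mathcal H^{n-1}$ --- applied both to $f$ and, crucially, to $f^{-1}$ (this is the role of the hypothesis $f^{-1}\in W^{1,q}_{\loc}$ for $n>2$, automatic for $n=2$), to conclude that $f$ cannot collapse $B(x,2r)$ to a sliver, i.e.\ $\diam f(B(x,\rho))^n\le C\,|f(B(x,2r))|=C\int_{B(x,2r)}J_f$; consequently $\meanint_{B(x,2r)}J_f\gtrsim(\diam f(B(x,\rho))/r)^n$ is large, so that $\log\log(1/\diam f(B(x,\rho)))\lesssim\log\log(e^e+\meanint_{B(x,2r)}J_f)$; and (iii) convexity of $t\mapsto t\log^\beta\log(e^e+t)$ (Jensen's inequality), which upgrades $\meanint_{B(x,2r)}J_f$ to $\meanint_{B(x,2r)}J_f\log^\beta\log(e^e+J_f)$ and yields the second term.

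Granting the single-ball estimate, one selects a good $\rho_i\in[r_i,2r_i]$ for each $i$ (the exceptional radii are null by Fubini), uses $\diam f(B_i)\le\diam f(B(x_i,\rho_i))$, and sums. The $r^s$-terms contribute at most $C\delta$; the integral terms sum --- after the covering/overlap bookkeeping carried out in~\cite{KZZ1,tapio} --- to at most $C\int_{U_\delta}J_f\log^\beta\log(e^e+J_f)$, where $U_\delta=\bigcup_i2B_i$ has measure at most $C\sum_ir_i^n\le C\delta$; hence this tends to $0$ as $\delta\to0$ by absolute continuity of the integral of $J_f\log^\beta\log(e^e+J_f)\in L^1(2B_0)$. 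Letting $\delta\to0$ gives $\mathcal H^{h_{n,\beta}}(f(E))=0$. The step I expect to be the real obstacle is the single-ball estimate --- above all, ruling out via $f^{-1}\in W^{1,q}_{\loc}$ that $f$ degenerates a ball to a thin set, so that its image diameter is genuinely controlled by the Jacobian integral, and then extracting the sharp $\log\log$ dependence; the localization and the summation are routine adaptations of the exponentially integrable case of~\cite{KZZ1,tapio}.
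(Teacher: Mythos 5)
Your overall scaffolding (reduce to $\mathcal H^s(E)=0$, cover $E$ efficiently, control the gauge of each image piece by $r^s$ plus a local integral of $J_f\log^\beta\log(e^e+J_f)$, finish by absolute continuity) matches the spirit of the paper, but the engine you propose --- the single-ball estimate --- has a genuine gap at its central step, namely the inequality $\diam(f(B(x,\rho)))^n\leq C\,\mathcal L^n(f(B(x,2r)))$. This is a quasi-roundness statement for images of balls, and it is not available under the hypotheses of the lemma: a homeomorphism with $f\in W^{1,q}_\loc$ and $f^{-1}\in W^{1,q}_\loc$, $q\in\mathopen]n-1,n\mathclose[$, can map a ball onto a very eccentric set. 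What $f^{-1}\in W^{1,q}_\loc$ actually yields (and this is exactly the content of Lemma~\ref{decom}) is a pointwise H\"older-type bound $\diam(f^{-1}(B(y,\tau)))\leq C_y\tau^{1/2}$ off a small exceptional set, with non-uniform constants; running your ``largest inscribed ball'' argument with it only produces $\mathcal L^n(f(B(x,2r)))\gtrsim (r/C_y)^{2n}$, which is far weaker than $(\diam f(B))^n$ and does not close your estimate. Moreover, the Morrey bound on spheres controls $\diam f(B(x,t))$ by $\int_{\bdary{B(x,t)}}|Df|^q$, and under the lemma's hypotheses there is no distortion assumption allowing you to trade $|Df|^q$ for $J_f$; so even the $f$-side of your case (ii) cannot be converted into the Jacobian integral you need on the right-hand side. (A smaller but related inaccuracy: Lusin's condition~N is neither assumed nor automatic for $W^{1,q}$ homeomorphisms with $q<n$; the usable tool is the one-sided inequality $\mathcal L^n(f(A))\leq\int_AJ_f$ from \cite[Lemma~3.2]{KKM}, which is precisely why $\Det Df=J_f$ appears among the hypotheses.)

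The paper sidesteps the quasi-roundness issue entirely by covering on the image side with genuinely round balls. Using Lemma~\ref{decom} it decomposes $f(\Omega)$ into a null piece and sets $F_i$ on which $\diam(f^{-1}(B(y,\tau)))\leq C_i\tau^{1/2}$ uniformly; then, given a cover of $E$ by balls $B(x_j,r_j)$ with $\sum r_j^s$ small, it covers $F_i\cap f(E)$ by disjoint balls $B(y_k,R_{i,j(k)})$ of radius $R_{i,j}=(r_j/C_i)^2$ whose preimages lie in $B(x_j,2r_j)$. For a round ball the volume is exactly $\omega_nR^n\leq\int_{f^{-1}(B(y,R))}J_f$, and the doubly logarithmic gauge is insensitive to the squaring $R=(r/C_i)^2$ since $\log\log(1/R)\approx\log\log(1/r)$; the splitting of the integral at the threshold $J_f\lessgtr r^{-\sigma}$ then plays the role of your Jensen step. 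If you want to salvage your route, you must replace the quasi-roundness claim by this image-side covering (or prove an analogue of Lemma~\ref{decom} and restrict to the good sets $F_i$); as written, the case ``$h_{n,\beta}(\diam f(B(x,\rho)))>Cr^s$'' of your single-ball estimate cannot be established.
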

The assumptions $f\in W^{1,q}_\loc(\Omega;\bR^n)$ and $\Det Df=J_f$ are due to our intention to use Lemma~3.2 from~\cite{KKM}. Before proving Lemma~\ref{lemma}, let us state the following auxillary result from~\cite[Lemma~4]{ndim} (see~\cite[Lemma~3.1]{KZZ1} for the planar case).
\begin{lemma}\label{decom}$ $
\begin{itemize}
\item[(\emph{i})]
Let $f\colon\Omega\to f(\Omega)\subset\bR^n$, $n>2$, be a homeomorphism such that $f^{-1}\in
W^{1,q}_\loc(\Omega,\bR^n)$ for some $q\in]n-1,n[$. Then there exists a set $F\subset
f(\Omega)$ such that $\mathcal{H}^{n-\frac{q}{2}}(F)=0$ and for all $y\in
f(\Omega)\setminus F$ there exist constants $C_y>0$ and $r_y>0$
such that 
\begin{equation}\label{ineq:desired}
\diam(f^{-1}(B(y,r)))\leq C_yr^{1/2},
\end{equation}
for all $0<r<r_y$.

\item[(\emph{ii})] If $n=2$, (i) is true with the assumption $f^{-1}\in W^{1,q}_\loc(\Omega,\bR^n)$ replaced by the condition $f\in W^{1,1}_\loc(\Omega)$ and with $q=1$, that is, with $\mathcal{H}^{3/2}(F)=0$ for the exceptional set $F$.
\end{itemize}
\end{lemma}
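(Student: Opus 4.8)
The plan is to prove the estimate for the homeomorphism $g:=f^{-1}$, which by hypothesis lies in $W^{1,q}_\loc$ with $q\in{}]n-1,n[$; the range $q>n-1$ is exactly what will later permit a Sobolev embedding on spheres. After localizing so that I may assume $g\in W^{1,q}$ with $\int|Dg|^q<\infty$, I introduce the restricted fractional maximal quantity
$$
\mathcal M(y)=\sup_{0<\rho<r_0}\rho^{\frac q2-n}\int_{B(y,\rho)}|Dg|^q
$$
and set $F=\{y\in f(\Omega):\mathcal M(y)=\infty\}$. The weak-type estimate for fractional maximal functions in terms of Hausdorff content, $\mathcal H^{n-q/2}_\infty(\{\mathcal M>\lambda\})\le C\lambda^{-1}\int|Dg|^q$, forces $\mathcal H^{n-q/2}_\infty(F)=0$, since $\{\mathcal M=\infty\}$ is the decreasing intersection of the superlevel sets; hence $\mathcal H^{n-q/2}(F)=0$. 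This already produces the claimed exponent, with $n-q/2=3/2$ in the case $n=2$, $q=1$.

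For $y\notin F$ I would not estimate $g$ pointwise over the whole ball, since off-centre chaining controlled by a single-point condition diverges when $q<n$; instead I exploit that $g$ is a homeomorphism. Pick $r_y$ so small that $B(y,2r)\subset f(\Omega)$ and $2r<r_0$ for $r<r_y$. On the annulus $\{r<|x-y|<2r\}$, a Chebyshev argument combined with the fact that $g$ restricts to a $W^{1,q}$ map on almost every sphere yields a radius $\rho_*\in{}]r,2r[$ for which $g|_{\partial B(y,\rho_*)}\in W^{1,q}(\partial B(y,\rho_*))$ and simultaneously
$$
\int_{\partial B(y,\rho_*)}|Dg|^q\,d\mathcal H^{n-1}\le \frac 2r\int_{B(y,2r)}|Dg|^q\le \frac{2}{r}\,\mathcal M(y)\,(2r)^{n-q/2}.
$$
Because $q>n-1$, the Morrey embedding on the $(n-1)$-sphere applies and the tangential gradient is dominated by $|Dg|$ for a.e.\ radius, so the oscillation of $g$ on that sphere is at most $C\rho_*^{1-(n-1)/q}\bigl(\int_{\partial B(y,\rho_*)}|Dg|^q\bigr)^{1/q}$.

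The homeomorphism property converts this into a diameter bound: by invariance of domain $g(\overline{B(y,\rho_*)})$ is the compact region filled by the topological sphere $g(\partial B(y,\rho_*))$, and such a region lies in the convex hull of its boundary, whence its diameter equals that of the boundary. Therefore
$$
\diam f^{-1}(B(y,r))\le \diam g\bigl(\overline{B(y,\rho_*)}\bigr)=\operatorname*{osc}_{\partial B(y,\rho_*)}g\le C\rho_*^{1-\frac{n-1}{q}}\Bigl(\int_{\partial B(y,\rho_*)}|Dg|^q\Bigr)^{1/q}.
$$
Inserting the previous display and using $\rho_*\sim r$, the powers of $r$ collapse exactly to $r^{1/2}$, giving $\diam f^{-1}(B(y,r))\le C\,\mathcal M(y)^{1/q}r^{1/2}$, so $C_y=C\,\mathcal M(y)^{1/q}$ works. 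Part (ii) runs through the same scheme at the endpoint $q=1=n-1$: the spheres are circles and $W^{1,1}(S^1)$ embeds into continuous functions, so the oscillation step survives; and since a planar $W^{1,1}$ homeomorphism has its inverse in $W^{1,1}_\loc$, the hypothesis on $f$ transfers to $g$, yielding $\mathcal H^{3/2}(F)=0$.

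The main obstacle is precisely the reduction just described: a single-point integrability condition on $Dg$ cannot govern the oscillation of $g$ across an off-centre ball, so the argument must route the diameter through one enclosing sphere, and this is viable \emph{only} because $q>n-1$ renders $g$ continuous on $(n-1)$-spheres with a quantitative oscillation estimate. The delicate technical points are the simultaneous selection of a radius that is both good on average and admissible for the spherical restriction, the domination of the tangential spherical gradient by $|Dg|$ for a.e.\ radius, and the bookkeeping that cancels all powers of $r$ to leave exactly $r^{1/2}$ together with the exceptional exponent $n-q/2$.
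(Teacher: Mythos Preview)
The paper does not prove this lemma; it is stated as an auxiliary result taken from \cite[Lemma~4]{ndim} and, for the planar case, \cite[Lemma~3.1]{KZZ1}. Your argument is correct and is essentially the one carried out in those references: the exceptional set is defined through the fractional maximal function of $|Df^{-1}|^q$, and the weak-type Hausdorff-content estimate gives $\mathcal H^{n-q/2}(F)=0$; off $F$, a good radius chosen via Fubini--Chebyshev in an annulus, together with the Morrey--Sobolev embedding on the $(n-1)$-sphere (absolute continuity on circles when $n=2$, $q=1$) and the elementary fact that a compact set has the same diameter as its boundary, yields the modulus $r^{1/2}$ with the power counting you describe. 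For part~(ii) your reduction via the Hencl--Koskela result that a planar $W^{1,1}_\loc$ homeomorphism has $W^{1,1}_\loc$ inverse is the intended bridge.
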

\begin{proof}[Proof of Lemma~\ref{lemma}]
The proof repeats the strategy of the proof of Theorem~1.1 from~\cite{tapio}. As in Lemma~3.2 from~\cite{KZZ1}, using Lemma~\ref{decom}, we may represent the image set $\Omega^\prime=f(\Omega)$ in the following form
$$
\Omega^\prime=F\cup\bigcup\limits_{j=1}^\infty\bigcup\limits_{k=1}^\infty\bigl\{y\in
\Omega^\prime\,|\,\diam(f^{-1}(B(y,r)))\leq kr^{\frac{1}{2}}\text{ for all
}r\in]0,1/j[\bigr\},
$$
obtaining a decomposition $\Omega^\prime= \bigcup_{i=0}^\infty F_i$ and a collection of constants $\{C_i\}_{i=1}^\infty$, $\{R_i\}_{i=1}^\infty$, such that $\mathcal H^{h_{n,\beta}}(F_0)=0$ and for each $i=1,2, \dots$, we have $1\leq C_i < \infty$, $R_i>0$ and
\begin{equation}\label{star}
 f^{-1}\left((f(A)\cap F_i)+\left(\frac{r}{C_i}\right)^2\right) \subset A + r
\end{equation}
for every $A \subset \Omega$ and for every $r \in ]0,R_i[$.

Fix $i\geq1$. Let us show that $\mathcal H^{h_{n,\beta}}(f(E)\cap F_i)=0$. Take some $$s\in]\max\{\dim_{\mathcal H}E,n-1\}, n[$$ and put $\sigma=\frac{n-s}{2}<\frac{1}{2}$. Choose $r_0\in]0,e^{-1/{\sigma^2}}[$ small enough to guarantee $\log^\beta(2\log\frac{C_i}{r_0})\leq r_0^{-\sigma}$.

Fix now $\varepsilon>0$. Using the absolute continuity of the Lebesgue integral and the given integrability of the Jacobian, we may find a number $\delta>0$, such that
$$
\int_A J_f(x)\log^\beta\log(e^e+J_f(x))dx<\varepsilon
$$
for each $A\subset\Omega$ such that $\mathcal L^n(A)<\delta$.

Since $\mathcal H^s(E)=0$, we may find a countable collection of balls\linebreak $\{B(x_j,r_j)\}_{j=1}^\infty$, covering $E$ and having radii less than $\min\{r_0, R_i, \frac{1}{C_i}\}$, such that
$$
\sum\limits_{j=1}^\infty 2^n\omega_n r_j^s <\min\{\varepsilon,\delta\}.
$$

Now, write $F_{i,j} = F_i \cap f(B(x_j,r_j))$ for each $j\in\bN$. Notice by~\eqref{star} that $f^{-1}(F_{i,j} + R_{i,j}) \subset B(x_j,2r_j)$, where $R_{i,j} = (\frac{r_j}{C_i})^2$.

Next, we use the $5r$-covering theorem to find an at most countable subcollection of pairwise disjoint balls $\{B(y_k,\rho_k)\}_{k\in K}$ from the collection
$$
\bigcup\limits_{j=1}^\infty\{B(y,R_{i,j}) : y \in F_{i,j}\}
$$
so that
$$
 F_i \cap f(E) \subset \bigcup_{k\in K} B(y_k,5\rho_k),
$$
where, for each $k\in K$, we have $y_k\in F_{i,j}$ for some $j=j(k)$ and $\rho_k=R_{i,j(k)}$.

Since $r_j<e^{-1/{\sigma^2}}<e^{-4}$ for all $j\in\bN$, we have $\frac{1}{10R_{i,j(k)}}>\frac{C_i^2 e^8}{10}>e$ for $k\in K$. Lemma~3.2 from~\cite{KKM} yields
$$
\mathcal{L}^n(B(y_k,R_{i,j(k)}))\leq\int_{f^{-1}(B(y_k,R_{i,j(k)}))}J_f(x)dx
$$
for all $k\in K$. Thus, we may estimate
\begin{align*}
\mathcal{H}&_{10r_0}^{h_{n,\beta}}(F_i \cap f(E)) \leq \sum_{k\in K}
10^n R_{i,j(k)}^n
\log^\beta\log\Bigl(\frac{1}{10R_{i,j(k)}}\Bigr) \\
\leq & \frac{10^n}{\omega_n}\sum_{k\in K}
\mathcal{L}^n(B(y_k,R_{i,j(k)}))\log^\beta\log\Bigl(\frac{1}{R_{i,j(k)}}\Bigr)\\
\leq &\frac{10^n}{\omega_n}
\sum_{k\in K}\int_{f^{-1}(B(y_k,R_{i,j(k)}))}\log^\beta\log\Bigl(\frac{1}{R_{i,j(k)}}
\Bigr)J_f(x)dx\\
= & \frac{10^n}{\omega_n}\sum_{k\in K}\bigg(\int_{\{x \in f^{-1}(B(y_k,R_{i,j(k)})): J_f(x) <
r_{j(k)}^{-\sigma}\}}\log^\beta\log\Bigl(\frac{1}{R_{i,j(k)}}\Bigr)J_f(x)dx\\
 &+ \int_{\{x \in f^{-1}(B(y_k,R_{i,j(k)})): J_f(x) \geq
r_{j(k)}^{-\sigma}\}}\log^\beta\log\Bigl(\frac{1}{R_{i,j(k)}}\Bigr)J_f(x)dx\bigg)\\
\leq& \frac{10^n}{\omega_n}\sum_{k\in K} r_{j(k)}^{-2\sigma}\mathcal{L}^n(f^{-1}(B(y_k,R_{i,j(k)})))\\
 &+\frac{10^n}{\omega_n}\sum_{k\in K}\frac{\log^\beta\log(1/R_{i,j(k)})}{\log^\beta\log(e^e +
1/r_{j(k)}^\sigma)}
\int_{f^{-1}(B(y_k,R_{i,j(k)}))}J_f\log^\beta\log(e^e + J_f),
\end{align*}
using the fact that $\log^\beta(2\log\frac{C_i}{r_j})\leq r_j^{-\sigma}$ for all $j\in\bN$. Let us estimate the first
term in the last sum. By grouping the balls according to $j(k)$ and using the relation $f^{-1}(F_{i,j} + R_{i,j}) \subset B(x_j,2r_j)$, we get
\begin{multline*}
 \sum_{k\in K} r_{j(k)}^{-2\sigma}\mathcal{L}^n(f^{-1}(B(y_k,R_{i,j(k)}))) =
\sum_{j=1}^\infty r_j^{s-n}\sum_{\substack{k\in K\\j(k)=j}}
\mathcal{L}^n(f^{-1}(B(y_k,R_{i,j})))\\
\leq\sum_{j=1}^\infty r_j^{s-n}\mathcal{L}^n(B(x_j,2r_j)) = \sum_{j=1}^\infty
2^n\omega_n r_j^s < \varepsilon.
\end{multline*}
Let us now estimate the second term in the sum. Since $r_j<\frac{1}{C_i}$ and $r_j<e^{-1/{\sigma^2}}<e^{-4}$ for all $j\in\bN$, we obtain for each $k\in K$
\begin{align*}
\frac{\log^\beta\log(1/R_{i,j(k)})}{\log^\beta\log(e^e+1/r_{j(k)}^\sigma)}\leq&
\frac{\log^\beta\bigl(2\log\frac{C_i}{r_{j(k)}}\bigr)}{\log^\beta\bigl(\sigma\log\frac{1}{r_{j(k)}}\bigr)}\leq
\frac{\log^\beta\bigl(4\log\frac{1}{r_{j(k)}}\bigr)}{\log^\beta\bigl(\sigma\log\frac{1}{r_{j(k)}}\bigr)}\\=&
\left(\frac{\log4+\log\log\frac{1}{r_{j(k)}}}{\log\sigma+\log\log\frac{1}{r_{j(k)}}}\right)^\beta\leq2^{2\beta}.
\end{align*}
Using again the fact that $f^{-1}(F_{i,j} + R_{i,j}) \subset B(x_j,2r_j)$ for all $j\in\bN$, we conclude
\begin{align*}
 \sum_{k\in K}&\frac{\log^\beta\log(1/R_{i,j(k)})}{\log^\beta\log(e^e + 1/r_{j(k)}^\sigma)}
\int_{f^{-1}(B(y_k,R_{i,j(k)}))}J_f\log^\beta\log(e^e + J_f) \\
&\leq 2^{2\beta}
\sum_{k\in K} \int_{f^{-1}(B(y_k,R_{i,j(k)}))}J_f\log^\beta\log(e^e + J_f)\\
&\leq 2^{2\beta}
\int_{\bigcup_{k\in K}f^{-1}(B(y_k,R_{i,j(k)}))}J_f\log^\beta\log(e^e + J_f)\\
&\leq 2^{2\beta}
\int_{\bigcup_{j=1}^\infty B(x_j,2r_j)}J_f\log^\beta\log(e^e + J_f) \leq
2^{2\beta}\varepsilon,
\end{align*}
since
\[
 \mathcal{L}^n\left(\bigcup_{j=1}^\infty B(x_j,2r_j)\right) \leq
\sum_{j=1}^\infty 2^n\omega_n r_j^n \le \sum_{j=1}^\infty 2^n\omega_n r_j^s < \delta.
\]

\end{proof}

\section{Planar case}

As it was mentioned in the first section, the assumption on $f$ to be a homeomorphism can be avoided in the plane due to factorization of the solutions of the Beltrami equation. The \emph{Beltrami equation} is an equation in the complex plane $\mathbb C$ of the form
\begin{equation}\label{eq:beltrami0}
\overline{\partial}f(z)=\mu(z)\partial f(z),
\end{equation}
where $\overline{\partial}=\frac{1}{2}(\partial_x+i\partial_y)$ and $\partial=\frac{1}{2}(\partial_x-i\partial_y)$. The function $\mu$ is the \emph{Beltrami coefficient} of the mapping $f$ (provided $f$ is a solution of \eqref{eq:beltrami0} in some sense). Given an abstract Beltrami coefficient $\mu(z)$, such that $|\mu(z)|<1$ almost everywhere, we can associate to $\mu$ a real-valued function $K=\frac{1+|\mu|}{1-|\mu|}$, called a \emph{distortion function} of the Beltrami equation. The terminology is natural, as the Beltrami equation yields the distortion inequality
$$
|Df(z)|^2\leq K(z)J_f(z)
$$
for its $W^{1,1}_\loc$--solutions. Conversely, a mapping $f$ with finite 
distortion function $K_f(z)$ satisfies almost everywhere the Beltrami equation 
with the associated Beltrami coefficient $\mu_f(z)=\overline{\partial}f(z)/\partial f(z)$. In this case, the distortion function of this Beltrami equation equals $K_f$ and $|\mu(z)|\leq\frac{K(z)-1}{K(z)+1}<1$ for almost every $z$.

\begin{proof}[Proof of Theorem~\ref{planar}]
Let $\mathcal A$ be defined by $\mathcal A(t)=p\frac{t}{1+\log t}-p$. Thus, our sub-exponential integrability assumption on $f$ may be rewritten as $e^{\mathcal A(K_f(z))}\in L^1(\Omega)$. Clearly, the function $\mathcal A$ satisfies conditions~1\nobreakdash--3 from~\cite[pp.~570--571]{book}, so, we may apply Theorem~20.5.2 in~\cite{book}, which gives the unique principal solution $g$ to the global Beltrami equation, satisfied by $f$ almost everywhere in $\Omega$. See~\cite[Definition~20.0.4]{book} for the definition of the principal solution of the Beltrami equation. In particular, $g$ is homeomorphic. In addition, Theorem~20.5.2 in~\cite{book} asserts that $f$ can be factorized as $f=\phi\circ g$ (where $\phi$ is holomorphic in $g(\Omega)$), provided $f\in W^{1,P}_\loc(\Omega)$ for
$$
P(t)=\begin{cases}
      t^2, & 0\leq t\leq 1,\\
      \frac{t^2}{\mathcal A^{-1}(\log t^2)}, & t\geq1,
     \end{cases}
$$
which is true by~\cite[Theorem~20.5.1]{book}.

Higher integrability of the Jacobian for $g$ follows from Theorem~1 in~\cite{gill}, yielding $J_g\log^\beta\log(e^e+J_g)\in L^1_\loc$ for all $\beta<p$. This allows to use Lemma~\ref{lemma}, giving $\mathcal H^{h_{2,\beta}}(g(E))=0$ for all $\beta<p$ and  each set $E\subset\Omega$ such that $\dim_{\mathcal H}E<2$. Finally, as $\phi$ is locally Lipschitz, we obtain $\mathcal H^{h_{2,\beta}}(f(E))=0$ for such $\beta$ and $E$.
\end{proof}

\subsection*{Acknowledgments} The authors thank Pekka Koskela for suggesting this problem.

\bibliographystyle{amsplain}
\bibliography{jyv6}

\newcommand{\noopsort}[1]{} \newcommand{\printfirst}[2]{#1}
  \newcommand{\singleletter}[1]{#1} \newcommand{\switchargs}[2]{#2#1}
  \newcommand{\forget}[1]{}\def\cprime{$'$} \def\cprime{$'$} \def\cprime{$'$}
  \def\cprime{$'$} \def\cprime{$'$}
\providecommand{\bysame}{\leavevmode\hbox to3em{\hrulefill}\thinspace}
\providecommand{\MR}{\relax\ifhmode\unskip\space\fi MR }
\providecommand{\MRhref}[2]{%
  \href{http://www.ams.org/mathscinet-getitem?mr=#1}{#2}
}
\providecommand{\href}[2]{#2}
\begin{thebibliography}{10}

\bibitem{eero}
Kari Astala, James Gill, Steffen Rohde, and Eero Saksman, \emph{{O}ptimal
  regularity for planar mappings of finite distortion}, Ann. Inst. H.
  Poincar\'e Anal. Non Lin\'eaire \textbf{27} (2010), no.~1, 1--19.

\bibitem{book}
Kari Astala, Tadeusz Iwaniec, and Gaven Martin, \emph{Elliptic partial
  differential equations and quasiconformal mappings in the plane}, Princeton
  Mathematical Series, vol.~48, Princeton University Press, Princeton, NJ,
  2009. \MR{MR2472875}

\bibitem{ball}
John~M. Ball, \emph{Convexity conditions and existence theorems in nonlinear
  elasticity}, Arch. Rational Mech. Anal. \textbf{63} (1976/77), no.~4,
  337--403. \MR{MR0475169 (57 \#14788)}

\bibitem{BojHigher}
Bogdan~V. Boyarski{\u\i}, \emph{Homeomorphic solutions of {B}eltrami systems},
  Dokl. Akad. Nauk SSSR (N.S.) \textbf{102} (1955), 661--664. \MR{MR0071620
  (17,157a)}

\bibitem{clop}
Albert Clop and Pekka Koskela, \emph{Orlicz-{S}obolev regularity of mappings
  with subexponentially integrable distortion}, Atti Accad. Naz. Lincei Cl.
  Sci. Fis. Mat. Natur. Rend. Lincei (9) Mat. Appl. \textbf{20} (2009), no.~4,
  301--326. \MR{MR2550847}

\bibitem{gehring}
Frederick~W. Gehring, \emph{The {$L\sp{p}$}-integrability of the partial
  derivatives of a quasiconformal mapping}, Acta Math. \textbf{130} (1973),
  265--277. \MR{MR0402038 (53 \#5861)}

\bibitem{Geh-Väi}
Frederick~W. Gehring and Jussi V{\"a}is{\"a}l{\"a}, \emph{Hausdorff dimension
  and quasiconformal mappings}, J. London Math. Soc. (2) \textbf{6} (1973),
  504--512. \MR{MR0324028 (48 \#2380)}

\bibitem{gill}
James~T. Gill, \emph{{P}lanar maps of sub-exponential distortion}, Ann. Acad.
  Sci. Fenn. \textbf{35} (2010), 197--207.

\bibitem{greco}
Luigi Greco, \emph{A remark on the equality {${\rm det}\, Df={\rm Det}\, Df$}},
  Differential Integral Equations \textbf{6} (1993), no.~5, 1089--1100.
  \MR{MR1230483 (95a:49104)}

\bibitem{greco2}
\bysame, \emph{Sharp integrability of nonnegative {J}acobians}, Rend. Mat.
  Appl. (7) \textbf{18} (1998), no.~3, 585--600. \MR{MR1686812 (2000g:42024)}

\bibitem{HeKoMa}
Stanislav Hencl, Pekka Koskela, and Jan Mal{\'y}, \emph{Regularity of the
  inverse of a {S}obolev homeomorphism in space}, Proc. Roy. Soc. Edinburgh
  Sect. A \textbf{136} (2006), no.~6, 1267--1285. \MR{MR2290133 (2008f:30056)}

\bibitem{HK03}
David~A. Herron and Pekka Koskela, \emph{Mappings of finite distortion: gauge
  dimension of generalized quasicircles}, Illinois J. Math. \textbf{47} (2003),
  no.~4, 1243--1259. \MR{MR2037001 (2005a:30031)}

\bibitem{iwasbo}
Tadeusz Iwaniec and Carlo Sbordone, \emph{On the integrability of the
  {J}acobian under minimal hypotheses}, Arch. Rational Mech. Anal. \textbf{119}
  (1992), no.~2, 129--143. \MR{MR1176362 (93i:49057)}

\bibitem{ivaver}
Tadeusz Iwaniec and Anna Verde, \emph{A study of {J}acobians in
  {H}ardy-{O}rlicz spaces}, Proc. Roy. Soc. Edinburgh Sect. A \textbf{129}
  (1999), no.~3, 539--570. \MR{MR1693625 (2000g:42025)}

\bibitem{KKM}
Janne Kauhanen, Pekka Koskela, and Jan Mal{\'y}, \emph{Mappings of finite
  distortion: condition {N}}, Michigan Math. J. \textbf{49} (2001), no.~1,
  169--181. \MR{MR1827080 (2002d:30027)}

\bibitem{KZZ1}
Pekka Koskela, Aleksandra Zapadinskaya, and Thomas Z{\"u}rcher,
  \emph{Generalized dimension distortion under planar {S}obolev
  homeomorphisms}, Proc. Amer. Math. Soc. \textbf{137} (2009), no.~11,
  3815--3821. \MR{MR2529891}

\bibitem{KZZ}
Pekka Koskela, Aleksandra Zapadinskaya, and Thomas Z\"urcher, \emph{Mappings of
  finite distortion: generalized {Hausdorff} dimension distortion}, J. Geom.
  Anal. \textbf{20} (2010), no.~3, 690--704.

\bibitem{koszho}
Pekka Koskela and Xiao Zhong, \emph{Minimal assumptions for the integrability
  of the {J}acobian}, Ricerche Mat. \textbf{51} (2002), no.~2, 297--311 (2003).
  \MR{MR2030546 (2005f:26036)}

\bibitem{tapio}
Tapio Rajala, \emph{Planar {S}obolev homeomorphisms and {H}ausdorff dimension
  distortion}, Proc. Amer. Math Soc., to appear.

\bibitem{ndim}
Tapio Rajala, Aleksandra Zapadinskaya, and Thomas Z\"urcher, \emph{Generalized
  {H}ausdorff dimension distortion in {E}uclidean spaces under {S}obolev
  mappings}, in preparation.

\end{thebibliography}
\end{document}